\def\BibTeX{{\rm B\kern-.05em{\sc i\kern-.025em b}\kern-.08em
    T\kern-.1667em\lower.7ex\hbox{E}\kern-.125emX}}
\begin{document}

\title{Estimation of Strong Structural Controllable Subspace of Network: Equitable Partition Method
}

\author{\IEEEauthorblockN{Lanhao Zhao}
\IEEEauthorblockA{
\textit{College of Automation, Qingdao University}\\
Qingdao, China \\
lugh56.2007@163.com}
}

\maketitle
\newtheorem{theorem}{Theorem}
\newtheorem{lemma}{Lemma}
\newtheorem{rmk}{Remark}
\newenvironment{proof}{{\noindent\it Proof}\quad}{\hfill $\square$\par}
\newtheorem{exm}{Example}
\newtheorem{de}{Definition}
\newtheorem{co}{Corollary}
\newtheorem{pro}{Proposition}

\begin{abstract}
In this paper, the strong structural controllability of the network is analyzed. Based on the unified definition of equitable partition for kinds of scene, the upper bound of the strong structural controllable subspace in different scenarios is given, and the strong structural observability is analyzed by using the characteristics of the dual system. Finally, the practical significance when the dimension of the strong structural controllable subspace is less than the number of individuals is given, and an invariant attribute of strong structural controllability analysis is proposed.
\end{abstract}

\begin{IEEEkeywords}
Network controllability; Strong structural controllability;, Equitable Partition
\end{IEEEkeywords}

\section{Introduction}
Strong structural controllability \cite{1} represents the controllability attribute of the network under the condition of ignoring the weight selection, which more deeply reflects the structural attribute of the network.

In recent years, some results of strong structural controllability of networks have been obtained \cite{2,3,4}. The existing results largely discuss whether a special structure is strongly structural controllable. These results show that many network topologies are not strongly structural controllable, so the problem is how to measure the degree that these network topologies are close to strongly structural controllable.

The estimation of strongly structural controllable subspaces is an attempt to deal with this problem. We call it SSC index here, which is equal to the dimension of strong structural controllable subspace. Some estimation methods are obtained by using algebraic properties, zero forcing set, distance partition and other tools \cite{5,6,7,8}.

The results based on equitable partition can not be ignored when it comes to the estimation of the controllable subspace of the system. Similar results are obtained for all positive networks, signed networks, matrix weighted networks, time-delay networks, signed matrix weighted networks and heterogeneous networks based on the definition of equivalent partition in different scenarios \cite{9,10,11,12,13,14}. One idea is whether we can use the concept of equitable partition to estimate the strongly structural controllable subspace based on these existing results?

In this paper, the strong structural controllability of the network is analyzed. Based on the definitions of various equitable partitions given in our previous work, the upper bound of the network strong structure controllable subspace in different scenarios is given, and the strong structural observability is analyzed by using the characteristics of the dual system. Finally, the practical significance when the dimension of the strong structural controllable subspace is less than the number of individuals is given, and an invariant attribute in the strong structure controllability analysis is proposed.

\section{Notations and Preliminaries}
In this paper, $R$ stands for the set of real number, $I_{m}$ and $0_{m}$ denote identity matrix and zero matrix with dimension $m$, respectively. $G=\{V, E, \mathcal{A}\}$ is called as matrix weighted graph, in which $V=\left\{v_{1}, v_{2} \cdots v_{n}\right\}$ represents the vertex set.  $\mathcal{A}=\left[\mathcal{A}_{i j}\right] \in R^{nd \times nd}$ is the weighted adjacency matrix, where $\mathcal{A}_{i j}$ belonging to $\mathbb{R}^{d \times d}$.  $E$ represents the edge set. The set of neighbors of agent $v_{i}$ is represented by $N_{i}$. $d_{i}=\sum_{j\in{N}_{i}}\mathcal{A}_{i j}$ denotes the degree of $i$ for matrix-weighted signed graph. Let $L=D-\mathcal{A}$ be the Laplacian matrix of $G$, where $D=\operatorname{diag}\left(d_{1}, \cdots, d_{n}\right)$. The entries of $L$ can be writen as follows
$$
l_{i j}=\left\{\begin{array}{ll}
	d_{i}, & i=j \\
	-\mathcal{A}_{i j}, & i \neq j
\end{array}\right . 
$$
Graph partition: For the vertex set $V$ of a graph, its subset $V_{i}$ is called a cell. It is called a trivial cell  if the cell contains only one vertex, otherwise it is a nontrivial cell. If any vertex in $V_{1}$ also belongs to $V_{2}$,
then $V_{1}$ is a sub-cell of $V_{2}$. We define $\pi=\left\{V_{1}, V_{2}, \cdots, V_{k}\right\}$. Then $\pi$ is a partition of graph when $V_{i} \cap V_{j}=\varnothing$ and $\bigcup_{i} V_{i}=V$ for $0<i, j<k$ and $i \neq j$. The characteristic matrix is
$$
P_{i j} \triangleq\left\{\begin{array}{ll}
	I_{d\times d} , & i \in V_{j} \\
	0_{d\times d} , & i \notin V_{j}
\end{array}\right.
$$
\begin{exm}
	The characteristic matrix of $\pi=\{\{1,2\},\{3,4,5\}\}$ is
	$$
	P(\pi)=\left[\begin{array}{ll}
		I_{d\times d} & 0_{d\times d} \\
		I_{d\times d} & 0_{d\times d} \\
		0_{d\times d} & I_{d\times d} \\
		0_{d\times d} & I_{d\times d} \\
		0_{d\times d} & I_{d\times d}
	\end{array}\right].
	$$
\end{exm} 
\section{Networks with Frist order Dynamics}
Consider a multiagent system with $n$ agents which state is denoted by the $x_{i}(t)\in \mathbb{R}^{d}$. The leader and follower are distinguished according to whether the agent receives external input signals. We assume that the first $m$ $(m<n)$ individuals are named as leaders where $V_{L}=\left\{v_{1}, \cdots, v_{m}\right\}$ is the leader set, and the follower set is represented by $V_{F}=\left\{v_{m+1}, \cdots, v_{n}\right\} $.

For the matrix-weighted signed networks under general linear dynamics, all followers are governed by the following dynamics
$$
\dot{x}_{i}(t)= x_{i}(t)+ u_{i}(t).
$$

For every leader, its dynamics is
$$
\dot{x}_{i}(t)= x_{i}(t)+ u_{i}(t)+ y_{i}(t)
$$
where $u_{i} \in R^{p}$ reflects the influence that each individual receives from the others, $y_{l} \in R^{q}$ represents an external input signal. The update rules based on neighbors are as follows
$$
u_{i}(t)=\sum_{j \in N_{i}} \left[\mathcal{A}_{i j} x_{j}(t)-\mathcal{A}_{i j} x_{i}(t)\right]
$$
where $K$ stands for the feedback gain, $\mathcal{A}_{i j}$ is the connection weight between the individual $i$ and $j$. Denote $x(k)=\left[x_{1}(t), \cdots, x_{n}(t)\right]^{T}$ as the aggregate state vector and $y(k)=\left[y_{1}(t), \cdots, y_{l}(t)\right]^{T}$ as the control input vector. Then the expression of multiagent system can be written as
$$
\dot{x}(t)=L  x(t)+ M u(t). \quad\quad\quad  (1)
$$
where matrix $M$ is used to distinguish the leaders from the followers.
$$
M_{i l}=\left\{\begin{array}{ll}
	I_{d\times d} & i=l \\
	0_{d\times d} & \text { otherwise }
\end{array}\right.
$$
\begin{rmk}
Considering that scalar weight network is a special form of matrix weight network \cite{11}, system (1) has a wide coverage of different scenarios.
\end{rmk}

\section{Main Results}
\subsection{Strongly structural controllable subspace and Equitable Partition}
For system (1), the system is controllable if and only if the matrix $\left[M \quad L M \quad L^{2} M \cdots {L}^{dn-1} M\right]$ has full row rank, Suppose the i-th weight selection method is adopted and the controllable subspace of system (1) is
$$
\mathcal{W}_{i}=\left\langle L \mid M\right\rangle=i m(M)+L \times i m(M)+\cdots+L^{dn-1} \times i m(M).
$$
It is a minimal $L-$ invariant subspace containing ${im}(M)$ \cite{13}.

A network with $V$ leaders is strong structural controllable if and only if (L, M) is a controllable pair for any choice of weight, or in other words,
At the same time, the dimension of strong structurally controllable
subspace (SSCS) is
$$
\mathcal{W}'= min (\mathcal{W}_{i})
$$
It is obvious that
\begin{lemma}
	If for any $\mathcal{W}_{j}$, $\mathcal{W}_{j} \subseteq \mathcal{W}_{*} $ hold, then $\mathcal{W}' \subseteq \mathcal{W}_{*} $
\end{lemma}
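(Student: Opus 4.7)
The plan is to argue directly from the definition of $\mathcal{W}'$, since the statement is essentially an immediate consequence of how the strong structural controllable subspace is constructed as a ``minimum'' over weight choices. First I would fix the interpretation: $\mathcal{W}'$ is selected from the family $\{\mathcal{W}_i\}$ of controllable subspaces indexed by weight selections, so $\mathcal{W}'$ itself equals some $\mathcal{W}_{i^*}$ (the witness achieving the minimum dimension, or equivalently the intersection $\bigcap_i \mathcal{W}_i$ under the subspace-theoretic reading). This identification is the only nontrivial point, because the lemma is a statement about set inclusion rather than dimension, so I must make sure $\mathcal{W}'$ is treated as an actual subspace and not merely as a number.

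Next I would invoke the hypothesis. Under either interpretation, the conclusion is immediate: if $\mathcal{W}_j \subseteq \mathcal{W}_*$ for every admissible weight index $j$, then in particular it holds for $j = i^*$, giving $\mathcal{W}' = \mathcal{W}_{i^*} \subseteq \mathcal{W}_*$. If instead $\mathcal{W}'$ is interpreted as $\bigcap_i \mathcal{W}_i$, one still has $\mathcal{W}' \subseteq \mathcal{W}_{j}$ for each $j$, and composing with the hypothesized inclusion yields the same conclusion.

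I do not expect any real obstacle here; the only care required is the notational one just mentioned, namely reading $\min(\mathcal{W}_i)$ in a way that makes the inclusion $\mathcal{W}' \subseteq \mathcal{W}_*$ meaningful. Accordingly, the write-up should be a single short paragraph, with perhaps a sentence beforehand clarifying what ``$\min$'' means so that the lemma does not appear to be comparing a number with a subspace.
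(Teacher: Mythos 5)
Your argument is correct and is exactly the reasoning the paper leaves implicit — the paper states this lemma with no proof at all (prefacing it with ``It is obvious that''), and your unpacking of $\min(\mathcal{W}_i)$ as either a witness $\mathcal{W}_{i^*}$ or the intersection $\bigcap_i \mathcal{W}_i$, followed by applying the hypothesis to that object, is the intended one-line justification. Your observation that the paper's definition of $\mathcal{W}'$ conflates a dimension with a subspace, and that the lemma only makes sense under the subspace reading, is a fair and worthwhile clarification.
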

\begin{de}
	Denote a matrix-weighted graph as $G$, and let $\pi=\left\{V_{1}, V_{2}, \ldots, V_{k}\right\}$ be a partition of $G$. The partition $\pi$ is said to be an equitable partition (EP) of $G$ if for any $r, s \in V_{i}, i, j=1,2, \ldots, k$
	$$\sum_{t_{1} \in V_{j}, t_{1} \in {N}_{r}} \mathcal{A}_{r t_{1}}=\sum_{t_{2} \in V_{j}, t_{2} \in {N}_{s}} \mathcal{A}_{s t_{2}}$$
	where $\left(t_{1}, r\right),\left(t_{2}, s\right) \in E$. 	
\end{de}
\begin{rmk}
   The definition can be applied to a variety of scenarios with some assumptions, For example, Time-delay system \cite{12}, Heterogeneous system \cite{14}.
\end{rmk}
\begin{rmk}
We note that the matrix weight network often assumes that the weight matrix is symmetric for the convenience of consensus research, and Definition 1 can also deal with the case of asymmetric matrix.
\end{rmk}
\subsection{Upper bound of strongly structural controllable subspace}
Denote $d(v_{i},Q)=\sum_{v_{j}\in Q}\mathcal{A}_{i j}$ and $d(V_{i},Q)=d(v,Q)$ for all $v \in V_{i}$, then we give the concept of quotient graph
\begin{de}
	For an equitable partition $\pi=$ $\left\{V_{1}, V_{2}, \ldots, V_{s}\right\}$ of a matrix-weighted network $G$, the quotient graph of $G$ over $\pi$ is a matrix-weighted network denoted by $G/\pi$ with the node set $V(G/\pi)=\left\{v_{1}, v_{2}, \ldots, v_{s}\right\}$ and the edge set is $E(G/ \pi)=\left\{\left(v_{i}, v_{j}\right) \mid d\left(V_{i}, V_{j}\right) \neq 0_{d \times d}\right\},$ where
	the weight of edge $\left(V_{i}, V_{j}\right)$ is $d\left(V_{i}, V_{j}\right)$ for $i \neq j \in \underline{s}$.
\end{de}

Denote $L_{\pi}$ as the Laplacian matrix of $G/\pi$
$$
\left(L_{\pi}\right)_{i j}=\left\{\begin{array}{ll}
	\sum_{v_{j} \in V(G/\pi)} d\left(V_{i}, V_{j}\right), & i=j \\
	-d\left(V_{i}, V_{j}\right), & i \neq j
\end{array}\right.
$$
\begin{lemma}
	$ \pi=\left\{V_{1}, V_{2}, \cdots, V_{s}\right\}$ is an EP for matrix-weighted signed graph $G$ and $P_{\pi}$ is the characteristic matrix. Then $L$ satisfies
	$$
	L P_{\pi}=P_{\pi} L_{\pi}.
	$$
	Furthermore $i m\left(P_{\pi}\right)$ is $L-$ invariant. 	
\end{lemma}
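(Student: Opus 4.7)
The plan is to verify the identity $LP_\pi = P_\pi L_\pi$ by a direct block-by-block comparison, indexing rows by individual vertices $r \in V$ and columns by cells $V_j$ of the partition, and then to read off $L$-invariance as an immediate consequence.

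First I would fix a vertex $r$ belonging to some cell $V_{i(r)}$ and compute the $(r,j)$ block of each side. Since $P_\pi$ has $I_d$ in position $(r,k)$ exactly when $r \in V_k$, the right-hand side collapses to $(P_\pi L_\pi)_{r,j} = (L_\pi)_{i(r),j}$. On the left side, $(LP_\pi)_{r,j} = \sum_{t \in V_j} L_{r,t}$. The goal is to show these two expressions agree for every choice of $r$ and $j$.

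Next I would split into two cases. When $j \neq i(r)$, every $t \in V_j$ is different from $r$, so $L_{r,t} = -\mathcal{A}_{r,t}$ and the sum equals $-d(r,V_j)$; Definition~1 of the equitable partition says this is $-d(V_{i(r)},V_j)$, which by definition of $L_\pi$ is exactly $(L_\pi)_{i(r),j}$. When $j = i(r)$, the sum picks up the diagonal term $L_{r,r} = d_r$ together with $-\mathcal{A}_{r,t}$ for the other $t \in V_j$; rewriting $d_r = \sum_k d(r,V_k)$ and applying the EP property cell by cell reduces the expression to the diagonal block $(L_\pi)_{i(r),i(r)}$. With the identity in hand, the invariance claim is a one-line consequence: for any $z$, $L(P_\pi z) = (L P_\pi) z = P_\pi (L_\pi z) \in \mathrm{im}(P_\pi)$.

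The main obstacle I anticipate is purely bookkeeping around the matrix-weighted block structure: each entry of $L$ and $L_\pi$ is itself a $d \times d$ matrix, so one has to be careful that the EP identity $\sum_{t \in V_j \cap N_r} \mathcal{A}_{r,t} = \sum_{t \in V_j \cap N_s} \mathcal{A}_{s,t}$ is applied block-wise and that the diagonal convention for $L_\pi$ (in particular, how $d(V_i,V_i)$ is handled) matches what the row-sum computation produces. Once the indexing is pinned down, both cases reduce mechanically to the EP condition stated in Definition~1, and no deeper argument is required.
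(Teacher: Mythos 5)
Your proof is correct and is essentially the argument the paper relies on: the paper omits the proof of this lemma entirely, deferring to Lemma 1 of its reference [11], and that deferred argument is precisely the block-by-block row-sum verification you carry out ($(LP_\pi)_{r,j}=\sum_{t\in V_j}L_{r,t}$ versus $(P_\pi L_\pi)_{r,j}=(L_\pi)_{i(r),j}$, split on $j=i(r)$ or not). You also correctly isolate the only delicate point, namely that the diagonal entry $(L_\pi)_{ii}$ must be read as the off-cell sum $\sum_{k\neq i}d(V_i,V_k)$ so that the within-cell terms produced by $L_{r,r}=d_r$ cancel against $-\mathcal{A}_{r,t}$ for $t\in V_{i(r)}$, using that Definition 1 applies with $j=i$ as well; once that convention is fixed, both cases reduce mechanically to the EP identity and the $L$-invariance of $im(P_\pi)$ follows immediately.
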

\begin{proof}
	The proof is similar to Lemma 1 in \cite{11}, and thus is omitted. In particular, there are similar conclusions for time-delay systems and heterogeneous systems \cite{12,14}.
\end{proof}

Next, we try to estimate the strongly structural controllable subspace. From definition 1, the existence of equitable partition depends on the selection of weight. Obviously, for each cell contains one and only one node, this partition always meets the definition, and we do not consider this trival scenario. Consider the following scenario, there is non trivial equitable parition under some weight selection methods, and other forms of weight selection methods do not meet definition 1. As shown in the Fig 1. There is non trivial equitable parition only if $A12=A13,A24=A34$. 

\begin{figure}
	\centering
	\includegraphics[scale=.8]{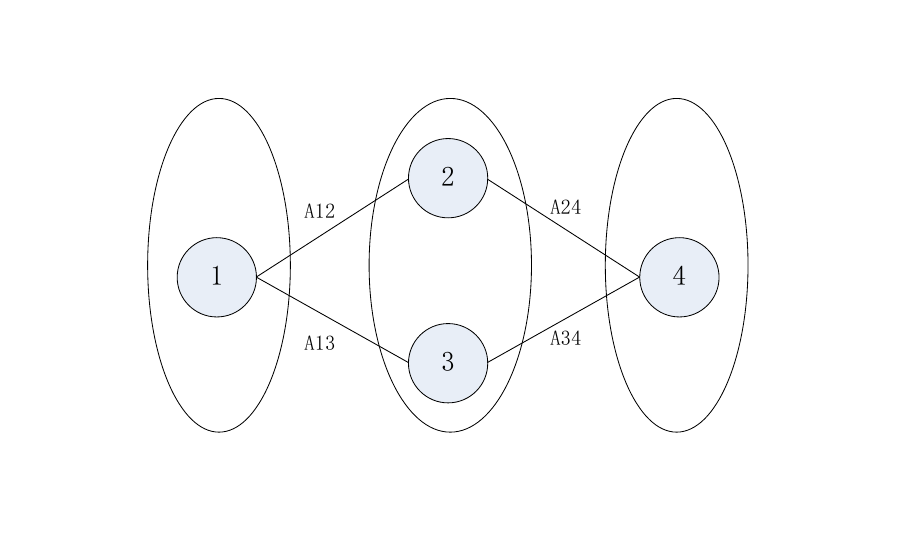}
	\caption{A matrix weighted signed graph with fixed topology including nontrival cell}
	\label{FIG:1}
\end{figure}

For convenience, for the same network, we use $\mathcal{W}_{jn}$ to represent the controllable subspace of the network with nontrivial equitable partition by appropriate weight, and $\mathcal{W}_{in}$ to represent the controllable subspace of the network without nontrivial equivalent partition by appropriate weight.

For any $\mathcal{W}_{jn}$, we can get
\begin{lemma}
	The controllable subspace $\mathcal{W}_{jn}$ satisfies $\mathcal{W}_{jn} \subseteq i m\left(P_{\pi j}\right)$.
\end{lemma}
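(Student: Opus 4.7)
The plan is to use the characterization of $\mathcal{W}_{jn} = \langle L \mid M \rangle$ as the smallest $L$-invariant subspace containing $im(M)$, which is stated just after equation (1) in the excerpt. It therefore suffices to show that $im(P_{\pi j})$ is both $L$-invariant and a superspace of $im(M)$; the required inclusion $\mathcal{W}_{jn} \subseteq im(P_{\pi j})$ then follows immediately from this minimality, and in fact Lemma 1 (applied with $\mathcal{W}_{*} = im(P_{\pi j})$) folds the result into the general scheme for bounding SSCS.

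The first half, $L$-invariance of $im(P_{\pi j})$, is already handed to me by Lemma 2: since $\pi_{j}$ is an equitable partition under the chosen weights, $L P_{\pi j} = P_{\pi j} L_{\pi j}$, so any $x = P_{\pi j} z$ satisfies $L x = P_{\pi j}(L_{\pi j} z) \in im(P_{\pi j})$. For the second half, I observe that each column of $M$ is a block indicator supported on exactly one leader row, while each column of $P_{\pi j}$ is a block indicator supported on an entire cell. Hence $im(M) \subseteq im(P_{\pi j})$ holds precisely when every leader occupies its own singleton cell in $\pi_{j}$, in which case the $M$-column of a leader $v_{l}$ coincides with the $P_{\pi j}$-column of the cell $\{v_{l}\}$. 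Once both inclusions are in hand, $L^{k}\, im(M) \subseteq im(P_{\pi j})$ for every $k$, and summing over $k = 0, 1, \dots, dn-1$ yields $\mathcal{W}_{jn} \subseteq im(P_{\pi j})$.

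The main obstacle is exactly this hidden hypothesis that every leader sits in a trivial cell of $\pi_{j}$. Definition 1 as written does not enforce it, so I would either (a) adopt it as a standing compatibility assumption on the EPs considered for controllability analysis, as is customary in \cite{11,13}, or (b) pass from $\pi_{j}$ to its refinement obtained by splitting each leader off into its own cell, then verify directly from Definition 1 that this refinement is still an EP and that its characteristic-matrix image can only shrink, so the upper bound derived from the refined partition remains valid. Apart from clarifying this point, the argument is essentially a two-line reduction to Lemma 2 together with the minimality characterization of the controllable subspace.
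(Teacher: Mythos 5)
Your proof is correct and follows essentially the same route as the paper's: establish $im(M)\subseteq im(P_{\pi j})$, invoke Lemma 2 for $L$-invariance of $im(P_{\pi j})$, and conclude by minimality of $\langle L\mid M\rangle$. The one substantive difference is that you correctly flag the hidden hypothesis that every leader must form a singleton cell of $\pi_j$ --- the paper's proof simply asserts ``every column of $M$ is also a column of $P_{\pi j}$'' without justification, and that assertion is false unless the partition is leader-invariant, so your option (a) of making this a standing assumption is the right repair and matches what the cited literature does. Be careful with your option (b), however: the refinement obtained by merely splitting each leader into its own cell need not itself be an equitable partition (two followers $r,s$ in a common cell may have equal total weight into the leader's original cell yet unequal weight into the leader alone), so one would instead have to pass to the coarsest EP finer than the leader/follower-induced partition, which weakens the resulting upper bound rather than preserving it verbatim.
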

\begin{proof}
	Every column of $M$ is also a column of $P_{\pi j}$, and then every column of $M$ is also a column of $P_{\pi j}$. It follows that $ im(M) \subseteq im\left(P_{\pi j}\right)$. By Lemma $2$,  ${im}(P_{\pi j})$ is
	$\tilde{L}-$ invariant. Then
	$
	\begin{aligned}
		\mathcal{W} &=im(M)+L \times i m(M)+\cdots+L^{dn-1} \times i m(M) \\
		& \subseteq im\left(P_{\pi j}\right)+L \times i m\left(P_{\pi j}\right)+\cdots+L^{dn-1} \times im\left(P_{\pi j}\right) \\
		&=im\left(P_{\pi j}\right).
	\end{aligned}
	$
\end{proof}
From Lemma 3, the dimension of controllable subspace is related to the number of cells in the partition. In order to obtain a more accurate estimation, we choose the equitable partition $\pi j m$ with the least cells under the arbitrary selection of weight.
\begin{theorem}
	The strong structural controllable subspace $\mathcal{W}'$ satisfies $\mathcal{W}' \subseteq i m\left(P_{\pi j m}\right)$.
\end{theorem}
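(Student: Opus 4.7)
The plan is to invoke Lemma~1 with $\mathcal{W}_{*}=im(P_{\pi_{jm}})$, which reduces the theorem to verifying that every controllable subspace $\mathcal{W}_{i}$ (one per weight selection $i$) sits inside $im(P_{\pi_{jm}})$. The role of the particular choice of $\pi_{jm}$---the equitable partition with the fewest cells that remains equitable under any admissible weight assignment---is precisely to ensure that $\pi_{jm}$ satisfies Definition~1 no matter which weights appear on the edges, so the hypotheses of Lemma~2 and the argument of Lemma~3 apply uniformly in $i$.

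Given this setup, I would proceed as follows. First, fix an arbitrary weight selection $i$ with Laplacian $L_{i}$; by the way $\pi_{jm}$ was chosen, it is equitable for $L_{i}$. Lemma~2 then gives $L_{i}P_{\pi_{jm}}=P_{\pi_{jm}}(L_{i})_{\pi_{jm}}$, so $im(P_{\pi_{jm}})$ is $L_{i}$-invariant. Next, since each leader constitutes a trivial (singleton) cell of $\pi_{jm}$, every column of $M$ coincides with a column of $P_{\pi_{jm}}$, giving $im(M)\subseteq im(P_{\pi_{jm}})$. Combining $L_{i}$-invariance with the expansion $\mathcal{W}_{i}=im(M)+L_{i}\,im(M)+\cdots+L_{i}^{dn-1}\,im(M)$ yields $\mathcal{W}_{i}\subseteq im(P_{\pi_{jm}})$, which is exactly the computation inside the proof of Lemma~3 but now applied for an arbitrary $i$. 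Invoking Lemma~1 then concludes $\mathcal{W}'\subseteq im(P_{\pi_{jm}})$.

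The main obstacle is not any individual algebraic step but rather pinning down the status of $\pi_{jm}$ as the coarsest partition that is equitable under \emph{every} weight assignment; for a network without any nontrivial structural symmetry this collapses to the singleton partition and the bound becomes vacuous. If one instead only has a $\pi_{jm}$ that is equitable under some specific weight $j^{*}$ rather than universally, Lemma~1 cannot be applied directly, and one must fall back on $\mathcal{W}'\subseteq \mathcal{W}_{j^{*}}$, which holds because $\mathcal{W}'$ is defined as the minimum controllable subspace over weight selections. Clarifying which of these two readings is intended by ``under the arbitrary selection of weight'' is the step most likely to require care; once that is settled, the remaining chain---Lemma~2 gives invariance, the Lemma~3 calculation propagates the inclusion, Lemma~1 passes it to $\mathcal{W}'$---is essentially forced.
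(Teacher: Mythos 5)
Your proposal is correct and follows essentially the same route as the paper: the paper's own proof is exactly the chain ``Lemma 3 gives $\mathcal{W}_{jn}\subseteq im(P_{\pi jm})\subseteq im(P_{\pi j})$, then Lemma 1 yields $\mathcal{W}'\subseteq im(P_{\pi jm})$.'' What you add, and what the paper does not, is an explicit verification of the hypothesis of Lemma 1. Lemma 1 requires $\mathcal{W}_{j}\subseteq\mathcal{W}_{*}$ for \emph{every} weight selection, whereas Lemma 3 only covers the weight selections that actually admit a nontrivial equitable partition (the paper's $\mathcal{W}_{jn}$, not the $\mathcal{W}_{in}$); the paper's proof silently skips this. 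Your two readings of ``the equitable partition with the least cells under the arbitrary selection of weight'' are precisely the two ways to close that gap: either $\pi_{jm}$ is equitable for every admissible weight, so the Lemma 3 computation applies uniformly and Lemma 1 is legitimately invoked, or $\pi_{jm}$ is equitable only for some specific weight $j^{*}$, in which case one bypasses Lemma 1 and uses $\mathcal{W}'\subseteq\mathcal{W}_{j^{*}}\subseteq im(P_{\pi j^{*}})$ directly (which, given the paper's Figure 1 example where the nontrivial partition exists only for particular weight choices, is almost certainly the intended reading). So your write-up is not a different proof so much as a repaired version of the paper's; the one thing you should state explicitly rather than leave as a ``clarification needed'' is which reading you commit to, since the theorem's content (and whether the bound is ever nonvacuous) depends on it.
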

\begin{proof}
	Form lemma 3, $\mathcal{W}_{jn}  \subseteq i m\left(P_{\pi j m}\right)  \subseteq i m\left(P_{\pi j}\right) $.
	Then form Lemma 1, if for any $\mathcal{W}_{j}$, $\mathcal{W}_{j} \subseteq \mathcal{W}_{*} $ hold, then $\mathcal{W}' \subseteq \mathcal{W}_{*} $, thus $\mathcal{W}' \subseteq i m\left(P_{\pi j m}\right)$.
\end{proof}
\begin{rmk}
	Theorem 1 gives the upper bound of strong structural controllable subspace. According to the \cite{12,14}, under appropriate assumptions, we can get a conclusion similar to theorem 1 for tiem-delay networks, heterogeneous networks and asymmetric matrix weighted networks. It shows that similar to the treatment method of controllable subspace, the concept of equitable partition can be used to deal with the estimation problem of strong structure controllable subspace too. With the estimation of the lower bound of strong structure controllable subspace given in previous work\cite{7}, a more accurate characterization of strong structure controllable subspace can be oatained.
\end{rmk}
\subsection{Strongly structural observability}

Compared with controllability, observability is also a topic worthy of discussion. It is used to measure the ability to reconstruct the whole network state. The system can be described as
$$
\dot{x}(t)=L x(t)+M y(t)
\quad\dot{y}(t)=M^{T} x(t) \quad\quad\quad\quad (2)
$$

The observability of system (2) is equivalent to the controllability of its dual system. The dual system is represented as follows.
$$
\dot{x}(t)=L^{T} x(t)+M y(t),
\quad\dot{y}(t)=M^{T} x(t) \quad\quad\quad\quad (3)
$$

And then we transform the observability problem of system (2) into the controllability problem of system (3). 

For scalar weighted undirected networks, $L=L^{T}$, system (1) is the same as system (3).
For scalar weighted directed networks, if the system (1) takes the opposite direction in all edge in network,then system (3) can be oatained.
For matrix weight networks, the situation is similar when the weight matrix is symmetric and undirected.
For matrix weight networks, when the weight matrix is asymmetric matrix or directed graph, it needs special discussion.

Through the above analysis, we transform the algebraic relationship between the dual system and the original system into the relationship on the graph, which provides a convenient perspective for analyzing the controllability and observability of the network. It is also applicable to strong structure controllability and strong structure observability.

\subsection{Invariant attribute of strong structural controllability analysis}
Consider the following scenario: the estimated value of the upper bound is less than the number of nodes in the network. The direct conclusion is that the network is not strongly structurally controllable. In addition, this estimate value also shows the maximum possible number of controllable nodes in the network under any weight selection.

Similar to the estimation of controllable subspace, on the one hand, this estimate value reflects the degree to which the system is close to strong structure controllability. On the other hand, when the upper bound of the estimation is the dimension of strong structure controllable subspace, this estimate value shows that there are always some nodes that are controllable for this type of network no matter how the weight is selected, these nodes are not affected by the weight selection. This reflects an invariant property of the network.
\section{Conclusion}
In this paper, the strong structural controllability of the network is analyzed. Based on the definitions of various equitable partitions given in our previous work, the upper bound of the network strong structure controllable subspace in different scenarios is given, and the strong structural observability is analyzed by using the characteristics of the dual system. Finally, the practical significance when the dimension of the strong structural controllable subspace is less than the number of individuals is given, and an invariant attribute of strong structural controllability analysis is proposed.

The application of equitable partition in dealing with strong structural controllable subspace also enlightens us that we should comprehensively consider the existing results on controllability and seek new ideas for reference. In particular, for the estimation of controllable subspace of network, in addition to equitable partition method and distance partition method, there are also estimation methods relying on a variety of controllable structures. Can these methods be used to estimate strong structure controllable subspace? It is an interesting topic.

\bibliographystyle{IEEEtran}
\bibliography{ref}

\end{document}